\renewenvironment{proof}{{\noindent \bfseries Proof:}}{}
\newtheorem{theorem}{Theorem}[section]
\newtheorem{lemma}[theorem]{Lemma}
\newtheorem{proposition}[theorem]{Proposition}
\newtheorem{definition}[theorem]{Definition}
\newtheorem{remark}[theorem]{Remark}
\renewcommand{\qed}{\hfill \ensuremath{\Box}}
\author{Carmen Maria Constantin, Andreas D\"oring}
\title{A Topos Theoretic Notion of Entropy}
\begin{document}

\begin{abstract}
In the topos approach to quantum theory, the spectral presheaf plays the role of the state space of a quantum system. We show how a notion of entropy can be defined within the topos formalism using the equivalence between states and measures on the spectral presheaf. We show how this construction unifies Shannon and von Neumann entropy as well as classical and quantum Renyi entropies. The main result is that from the knowledge of the contextual entropy of a quantum state of a finite-dimensional system, one can (mathematically) reconstruct the quantum state, i.e., the density matrix, if the Hilbert space is of dimension $3$ or greater. We present an explicit algorithm for this state reconstruction and relate our result to Gleason's theorem.
\end{abstract}

\maketitle
\section{Introduction}

It has been argued that Quantum Mechanics can be understood in a more natural way as a theory about the possibilities and impossibilities of information transfer and processing, as opposed to a theory about the mechanics of nonclassical waves or particles \cite{1,2,3,5}. Therefore understanding the representation and manipulation of information can help us shed light on the fundamental structure of both classical and quantum theories and it can lead to fresh insights about the essential differences between these two.

The concept of entropy plays an important role within Information Theory. For classical systems, Shannon entropy \cite{Sha48} is typically used, for quantum systems, von Neumann entropy \cite{vN55}. Several generalizations of these have already been considered \cite{stephanie,barnum}. It is interesting to ask how much information about a quantum state can be encoded using Shannon and von Neumann entropies, and in particular how much information can be encoded by simultaneously considering all classical perspectives on a given state and their associated classical entropies.

This leads us to define a new notion of contextual entropy which unifies Shannon and von Neumann entropy using topos theoretic formalism. The ingredients required for this construction were initially introduced in the context of the topos approach to quantum mechanics by Chris Isham and Jeremy Butterfield, and later developed by Chris Isham and Andreas D\"{o}ring. One of the aims of the programme was to introduce a new mathematical framework in which to express quantum mechanics in a way that was structurally similar to classical mechanics. So far at the most basic level, physical theories have been formulated within the mathematical universe of sets and functions. The universe of sets and functions is one example of a topos. However, more general universes (or topoi) do exist and many physical structures such as states, observables and propositions about these, can be formulated in a natural way within these generalised settings. This reformulation hopes to offer a fresh insight into the structural fabric of physical reality.

One of the main ideas underpinning the topos formalism is that one might hope to obtain a complete description of a quantum system by looking at that system from all possible classical perspectives, and keeping track of the information obtained in this way. This is also one of the main ideas behind the present approach to give a contextual definition of entropy: we try to give a definition that takes into account all classical perspectives on a quantum state at the same time. Moreover our definition does not directly depend on the interpretation of states as density matrices within the usual Hilbert space formalism of quantum mechanics, but we define entropy on the set of finitely additive probability measures on a certain non-commutative space, the so-called spectral presheaf. Andreas D\"{o}ring has shown \cite{ms} using Gleason's theorem that this set of measures is in fact equivalent to the set of states. 

We show here that von Neumann entropy, which characterises quantum states up to unitary equivalence, appears within a certain classical context. Moreover, we show that contextual entropy, which is the collection of Shannon entropies associated with all the available classical perspectives on a quantum state, contains enough information to completely reconstruct the quantum state. 

Contextual entropy therefore provides a mathematical encoding of quantum states based on the information theoretical concept of entropy. As such it is a step towards an information-theoretic characterisation of quantum states.

In Section \ref{ctxt} we show how a measure on the spectral presheaf (i.e. a state) gives us a canonical probability distribution in each classical context, and how it is therefore possible to associate a Shannon entropy to each classical 'perspective' on a state. Contextual entropy is defined in terms of this collection of Shannon entropies, which are shown to form a global section of a certain real-number presheaf. We also show how one can retrieve the von Neumann entropy of a state from such a global section. This confirms our expectation that entropy within the topos approach should 'look' like Shannon entropy from each classical perspective, but one should also be able to retrieve the quantum mechanical von Neumann entropy by taking into account all perspectives at the same time. In fact, one can do even more than this, and we show that contextual entropy encodes enough information to explicitly reconstruct the quantum state from which it originated. This argument relies on a powerful result known as the Schur-Horn Lemma. One of its advantages is that it provides us with us a new insight into Gleason's theorem. This is discussed in Section \ref{gls}.

In Section \ref{cns} we make a comparison between Shannon, von Neumann and contextual entropies. This allows us to observe, for example, that one of the differences between Shannon and von Neumann entropies (the property of being monotone) is precisely due to contextuality, although this idea is not explicitly taken into account into the definition of these two entropies. 

Finally, in Section \ref{other} we show that it is possible to adapt other classical entropies within the formalism of the topos approach, given that they satisfy a certain weak recursivity property. We will show how Renyi entropies can be defined within the topos formalism, and moreover we will see that contextual Renyi entropies also encode sufficient information to allow for state reconstruction.

\section{Background}\label{bckg}
\subsection{The spectral presheaf}\label{spectral}

The spectral presheaf is a central object in the topos approach to quantum theory. It was introduced by Isham and Butterfield \cite{b1,b2,b3,b4} and later used by Isham and D\"{o}ring \cite{d1,d2,d3,d4}. Within the topos approach, the spectral presheaf associated with the von Neumann algebra of physical quantities/observables of a quantum system is the analogue of the state space of a classical system. The idea behind this construction is that we can hope to obtain a complete picture of a quantum system (described by a non-commutative von Neumann algebra $N$) by fitting together all the classical perspectives on that system in a consistent way.

We call a commutative subalgebra of $N$ a \textit{context} and we assume that every context contains the identity operator $1$. Von Neuman's double commutant construction establishes a bijective correspondence between commutative subalgebras of a type I von Neumann algebra and pairwise orthogonal families of projections which add up to the identity. Such families are usually used in quantum mechanics to describe projective measurements. Thus every commutative subalgebra $V$ can be expressed in a cannonical way as $V=\{P_1,\ldots,P_n\}''$. 

Therefore a context can be interpreted as being the family of projections onto eigenspaces of any observable of the form $A=\sum_{i=1}^k a_i P_i$.  Observables that can be measured simultaneously are represented by self-adjoint operators that can be diagonalized simultaneously, hence they have a joint set of eigenspaces. A context corresponds precisely to such a collection of eigenspaces and as such it corresponds to a family of simultaneously measurable observables. This allows us to also interpret a context as a `classical perspective' on the quantum system. 

The set of all contexts, minus the trivial one: $V_0=\mathbb{C}1$, is denoted by $\mathcal{V}(N)$. This is a partially ordered set under inclusion, and as such it forms a category. The objects of the context category are the contexts themselves, while the arrows are the inclusion maps.

\begin{definition}
The \textbf{spectral presheaf} $\underline{\Sigma}^N$ of a given von Neumann algebra $N$ is the following contravariant functor from the category $\mathcal{V}(N)$ to the category of sets:

\begin{enumerate}
\item[a)] on objects: for all $V\in \mathcal{V}(N)$, let $\underline{\Sigma}^N_V$ be the Gelfand spectrum of $V$, i.e. the set of multiplicative positive linear functionals of norm one, or equivalently, the set of pure states on $V$, equipped with the weak-* topology
\item[b)] on arrows: for all inclusions $i_{VV'}:V'\hookrightarrow V$, let $\underline{\Sigma}^N(i_{VV'}): \underline{\Sigma}^N_V \rightarrow \underline{\Sigma}^N_{V'}$ be the function that sends each pure state $f$ to its restriction $f|_{V'}$ to the
smaller algebra. This function is well-known to be continuous and surjective.
\end{enumerate}
\end{definition}

When no confusion arises we will simply write $\underline{\Sigma}$ instead of $\underline{\Sigma}^N$.

The projections of a von Neumann algebra $N$ stand for propositions of the form "$A\varepsilon\Delta$", that is propositions of the form "the physical quantity $A$, which is
represented by the self-adjoint operator $A\in N$, has a value in the set Borel set $\Delta$". More precisely, each projection corresponds to an equivalence class of such
propositions.

If we take a commutative subalgebra $V$ of $N$, every state $f\in \mathrm{Spec}\, V$ of $V$ gives us a way to assign truth values to propositions which involve quantities
represented by self-adjoint operators from $V$. Any such $f$ can take only one of the two values $0,1$ when applied to a projection $P\in V$, since
$$f(P)=f(P^2)=f(P)f(P)$$
So we can assign to those propositions which correspond to the projection $P$ the value true if $f(P)=1$, and false if $f(P)=0$. We know from the Kochen-Specker theorem that it
would not possible to make such truth-value assignments for the projections of the non-commutative algebra $N$ (unless $N$ was a type $I_2$-algebra).

The projections in a commutative von Neumann algebra $V$ correspond bijectively to clopen subsets of $\mathrm{Spec}\, V$:

\begin{proposition}\label{alpha}
If $\mathcal{P}(V)$ is the lattice of all projections in $V$ and $Cl(\mathrm{Spec}\, V)$ is the lattice of clopen subsets of $\mathrm{Spec}\, V$, then the map 
\begin{align*}
\alpha_V: \mathcal{P}(V)&\rightarrow Cl(\mathrm{Spec}\, V) \\
P&\mapsto S_{P}:=\{f\in \mathrm{Spec}\, V~|~f(P)=1\}
\end{align*}
is a lattice isomorphism.
\end{proposition}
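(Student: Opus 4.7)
The plan is to leverage Gelfand duality for the commutative $C^*$-algebra $V$, which identifies $V$ with $C(\mathrm{Spec}\, V)$, the continuous functions on its spectrum, via the Gelfand transform $P \mapsto \hat P$, where $\hat P(f) := f(P)$. Under this identification, projections correspond exactly to characteristic functions of clopen sets, and the map $\alpha_V$ is essentially the identification $P \leftrightarrow \chi_{S_P}$. Showing that $\alpha_V$ is a well-defined lattice isomorphism then reduces to three checks: that $S_P$ is clopen, that the correspondence is bijective, and that it carries $\wedge,\vee$ in $\mathcal{P}(V)$ to $\cap,\cup$ in $Cl(\mathrm{Spec}\, V)$.

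First I would verify well-definedness. For any $f\in\mathrm{Spec}\, V$, multiplicativity together with $P^2=P$ gives $f(P)=f(P)^2$, so $f(P)\in\{0,1\}$. Hence $\hat P$ is a continuous $\{0,1\}$-valued function, and $S_P=\hat P^{-1}(\{1\})$ is both open (as $\hat P^{-1}((1/2,3/2))$) and closed (as $\hat P^{-1}(\{1\})$), i.e.\ clopen. Injectivity of $\alpha_V$ is immediate from the faithfulness of the Gelfand transform: if $S_P=S_Q$ then $\hat P=\chi_{S_P}=\chi_{S_Q}=\hat Q$, so $P=Q$. For surjectivity I would take any clopen $U\subseteq\mathrm{Spec}\, V$, observe that $\chi_U$ is continuous, and use Gelfand duality to obtain $a\in V$ with $\hat a=\chi_U$; then $\widehat{a^2}=\hat a^2=\hat a$ and $\widehat{a^*}=\overline{\hat a}=\hat a$ force $a^2=a=a^*$, so $a$ is a projection with $\alpha_V(a)=U$.

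For the lattice structure, since $V$ is commutative all projections in $\mathcal{P}(V)$ commute, so $P\wedge Q=PQ$ and $P\vee Q = P+Q-PQ$. Applying any $f\in\mathrm{Spec}\, V$ and using multiplicativity and linearity, $f(PQ)=f(P)f(Q)$ equals $1$ iff both $f(P)=1$ and $f(Q)=1$, giving $S_{P\wedge Q}=S_P\cap S_Q$; similarly $f(P+Q-PQ)=1$ iff $f(P)=1$ or $f(Q)=1$, giving $S_{P\vee Q}=S_P\cup S_Q$. Together with the evident compatibility with $0\mapsto\emptyset$ and $1\mapsto\mathrm{Spec}\, V$, this makes $\alpha_V$ a lattice homomorphism, and combined with bijectivity, a lattice isomorphism.

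The only step that is not a one-line verification is surjectivity, and its essential content is the Gelfand-duality fact that every continuous function on $\mathrm{Spec}\, V$ is the transform of some element of $V$. Once that is in hand, everything else is a computation using the definitions of multiplicativity and of meet/join of commuting projections; so I would expect to spend most of the write-up isolating and citing that piece of Gelfand theory, rather than wrestling with any genuine obstacle.
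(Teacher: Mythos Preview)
Your proof is correct and follows essentially the same route as the paper: both use the Gelfand transform to show $S_P$ is clopen (via $\hat P$ being a continuous $\{0,1\}$-valued function) and then appeal to the Gelfand representation being a $*$-isomorphism to obtain bijectivity. Your write-up is in fact more complete than the paper's, which stops after clopenness and bijectivity and never explicitly checks that $\alpha_V$ carries $\wedge,\vee$ to $\cap,\cup$; your verification of this via $f(PQ)=f(P)f(Q)$ and $f(P+Q-PQ)$ fills that gap.
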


\begin{proof}
It is easy to check that $S_{P}$ is indeed a clopen subset of $\mathrm{Spec}\, V$. We have
$$S=\overline{P}^{-1}\left(\,\left(\frac{1}{2},\infty\right)\,\right)$$
and so $S$ is open. Similarly 
$$\mathrm{Spec}\, V \backslash S= \overline{P}^{-1}\left(\,\left(-\infty,\frac{1}{2}\right)\,\right)$$
and so $\mathrm{Spec}\, V\backslash S$ is open, hence $S$ is closed.

Since the Gelfand representation is a *-isomorphism for unital commutative algebras, $\alpha_V$ must be a bijective map. \qed
\end{proof}

We have seen that for a `classical part' of a quantum system described by a commutative algebra $V$ there is a correspondence between propositions, or rather the projections which
represent them, and clopen subsets of the Gelfand spectrum of the algebra $V$. Next we will see that for quantum systems (as a whole) there is an analogous correspondence between
propositions and clopen sub-objects of the spectral presheaf.

The collection of all contexts of a non-commutative von Neumann algebra $N$ can be understood as the collection of all classical perspectives on a quantum system. As we have
mentioned before, the idea behind the spectral presheaf is to characterise a quantum system by taking into account all the classical perspectives at the same time. In order to do
this, we need to adapt every proposition about the whole quantum system to each possible classical context. That is, given a proposition "$A\varepsilon\Delta$" and its representing projection $P$, we want to choose for every context $V$ the strongest proposition implied by "$A\varepsilon\Delta$" which can be made from the perspective of that context. For
projections, this is equivalent to taking the smallest projection in any context $V$ that is larger or equal to $P$:
$$\delta^o(P)_V:=\bigwedge\{ Q\in\mathcal{P}(V)~|~Q\geq P\}$$
If $P\in\mathcal{P}(V)$, the above approximation will simply be equal to $P$. We will call the original proposition "$A\varepsilon\Delta$" the \textit{global proposition}, while a
proposition "$B\varepsilon\Gamma$" corresponding to the projection $\delta^o(P)_V$ will be called a \textit{local proposition}.

From the family of projections $(\delta^o(P)_V)_{V\in\mathcal{V}(N)}$ we can obtain a family of clopen subsets of the Gelfand spectra $(\mathrm{Spec}\, V)_{V\in\mathcal{V}(N)}$ by
choosing for every $V$ the subset
$$S_{\delta^o(P)_V}=\alpha_V(\delta^o(P)_V)\subseteq\mathrm{Spec}\, V$$

These subsets behave nicely under the restriction mappings of the spectral presheaf $\underline{\Sigma}$ and so we can give the following definition.

\begin{definition}
The \textbf{daseinisation of a projection $P$} is the subobject (or equivalently, the subpresheaf) $\underline{\delta(P)}$ of the spectral presheaf $\underline{\Sigma}$ given by
the collection $(S_{\delta^o(P)_V})_{V\in\mathcal{V}(N)}$ of clopen subsets, together with the restriction mappings between them.
\end{definition}

The daseinisation $\underline{\delta(P)}$ of a projection $P$ representing the proposition "$A\varepsilon\Delta$" can be seen as the analogue of the measurable subset
$f_A^{-1}(\Delta)$ of the state space of a classical system. We say that $\underline{\delta(P)}$ is the representative of the global proposition "$A\varepsilon\Delta$".

The daseinisation $\underline{\delta(P_\psi)}$ of a projection $P_\psi$ which projects onto the ray spanned by the vector $\psi$ is called the \textit{pseudo-state} associated to
$\psi$. It is the analogue of a point in the state space of a classical system. It is important to note that the pseudo-states are not global elements of $\underline{\Sigma}$. In
fact, global elements of a presheaf are the category-theoretical analogues of points. Isham and Butterfield have observed \cite{b1} that the Kochen-Specker theorem is equivalent to the fact that the spectral presheaf has no global elements. A global element $\gamma$ of $\underline\Sigma$ would pick one $\gamma_V\in\underline\Sigma_V$ for each context $V$ such that, whenever $V'\subset V$, one would have $\gamma_V|_{V'}=\gamma_{V'}$. Each $\gamma_V$ assigns values to all physical quantities described by self-adjoint operators $A$ in $V$
by evaluation, i.e., by simply forming $\gamma_V(A)$. If $A$ is contained in different commutative subalgebras $V,\widetilde V$, then it is also contained in $V':=V\cap\widetilde
V$, and $\gamma_V(A)=\gamma_{V'}(A)=\gamma_{\widetilde V}(A)$, so the defining condition of the global element $\gamma$ guarantees that $A$ is assigned the same value in every
context. Since every self-adjoint operator is contained in some commutative subalgebra $V$, a global element $\gamma$ of $\underline\Sigma$ would provide a consistent assignment
of values to all self-adjoint operators. But the Kochen-Specker theorem precisely shows that this is impossible, hence such global elements $\gamma$ cannot exist.

Pseudo-states however are minimal sub-objects in a suitable sense: they come from rank-1 projections, the smallest non-trivial projections, and daseinisation is order-preserving,
so pseudo-states are the smallest non-trivial sub-objects of $\underline\Sigma$ that can be obtained from daseinisation. Hence, pseudo-states are `as close to points as possible'.

\begin{definition}
A subobject $\underline{S}$ of the spectral presheaf $\underline{\Sigma}$ such that for each $V\in\mathcal{V}(N)$ the component $\underline{S}_V$ is a clopen subset of
$\underline{\Sigma}_V$ is called a \textbf{clopen subobject}.
\end{definition}

Note that all sub-objects obtained from the daseinisation of projections are clopen. The sub-objects of the spectral presheaf are the quantum analogues of subsets of the phase space in classical physics. The collection of all sub-objects of the spectral presheaf can be turned into a complete Heyting algebra (and hence also a frame) by defining suitable meet and join operations. 

\begin{definition}
 If $\underline{S_1}$ and $\underline{S_2}$ are two sub-objects of the spectral presheaf, their join is defined by stagewise unions in the following way:
$$(\underline{S_1}\vee\underline{S_2})_V=\underline{S_1}_V\cup\underline{S_2}_V$$
Similarly, their meet is given by stagewise intersections:
$$(\underline{S_1}\wedge\underline{S_2})_V=\underline{S_1}_V\cap\underline{S_2}_V$$
\end{definition}

We denote the collection of all sub-objects of the spectral presheaf by $Sub(\underline{\Sigma})$. It can be seen as the analogue of the power set of the state space of a classical system.

One can prove that the clopen sub-objects of the spectral presheaf $\underline{\Sigma}$ also form a complete Heyting algebra under stagewise meet and join operations. We will denote this algebra by $Sub_{cl}(\underline{\Sigma})$. This can be seen as the analogue of the collection of measurable subsets of the state space of a classical system.

\subsection{States as measures on the spectral presheaf}\label{measures}

In general, a state on a von Neumann algebra is a positive linear functional of unit norm on that algebra. Given such a state, we can associate to it a certain measure on the
corresponding spectral presheaf. This construction was explored in detail by D\"oring, who also showed that measures on the spectral presheaf can be defined without reference to
states and moreover that from each abstractly defined measure a unique state can be reconstructed \cite{ms}. We give a brief overview of these ideas below. 

In classical physics states are represented by probability measures on state space, and pure states are represented by Dirac measures. A probability measure assigns a number between $0$ and $1$ to each measurable subset of state space. Within the topos approach the role of the state space is played by the spectral presheaf, and so in analogy with classical mechanics we would like states to be represented by probability measures on (clopen subobjects of) the spectral presheaf. However, since subobjects of the spectral presheaf are not simply sets, but collections of sets, we can not expect the values taken by the measure to be given by single numbers. Instead we would expect to obtain a collection of such numbers, one for each context of the algebra which represents our system. With this in mind we give the following definitions of real number objects and their global sections, which will be essential to our discussion.

\begin{definition}
Given a von Neumann algebra $N$ and its associated poset of abelian subalgebras $\mathcal{V}(N)$, let $\downarrow V:=\{W\in\mathcal{V}(N)~|~W\subseteq V\}$ denote the down-set of a context $V\in\mathcal{V}(N)$.The presheaf $\underline{\mathbb{R}^\succeq}$ is defined
\begin{itemize}
\item on objects: $\underline{\mathbb{R}^\succeq}_V=\{f:\downarrow V\rightarrow \mathbb{R}~|~ f \text{ is order reversing }\}$
\item on arrows: for $i_{V'V}: V' \hookrightarrow V$, $\underline{\mathbb{R}^\succeq}(i_{V'V}):\mathbb{R}\rightarrow\mathbb{R}$ is given by $$\underline{\mathbb{R}^\succeq}(i_{V'V})(f):=f|_{_{\downarrow V'}}$$
\end{itemize}
Note that this presheaf lives in the same topos as $\underline{\Sigma}^N$. However, we do not explicitly specify this topos, by indicating the base category, when discussing this and similar real-number presheaves. It is usually clear from the context, which base category we are using.

A global section of this presheaf can be regarded as an order-reversing function from the partially ordered set $\mathcal{V}(N)$ to the real numbers equipped with the usual ordering.
\end{definition}

The presheaf defined above plays an important role within the topos approach, and is discussed extensively in \cite{d3,disham}. However, when defining measures we will only use a sub-presheaf of this real-number presheaf, which we denote by $\underline{[0,1]^\succeq}$. Later on, when we will introduce the notion of entropy we will encounter a closely related presheaf, $\underline{[0.\ln n]^\preceq}$, where $n$ (this time finite) denotes the dimension of the algebra corresponding to our system. In this case global sections will be equivalent to order-preserving functions from $\mathcal{V}(N)$ to the real number interval $[0,\ln n]$.

\begin{definition}
Given a von Neumann algebra $N$, a measure on its associated spectral presheaf $\underline{\Sigma}$ is a mapping
\begin{align*}
 \mu: \mathrm{Sub}_{\mathrm{cl}}(\underline{\Sigma}) & \longrightarrow \Gamma \underline{[0,1]^{\succeq}}\\
 \underline{S}=(\underline{S}_V)_{V\in\mathcal{V}(N)} &\longmapsto  \mu(\underline{S}):\mathcal{V}(N)\rightarrow [0,1] \\
 & \ \ \ \ \ \ \ \ \ \ \ \ \ \ \ \ V\ \ \ \mapsto  \mu(\underline{S}_V) 
\end{align*}
which satisfies the following conditions:
\begin{enumerate}
 \item $\mu(\underline{\Sigma})=1_{\mathcal{V}(N)}$
 \item for all $\underline{S}_1$, $\underline{S}_2\in\mathrm{Sub}_{\mathrm{cl}}(\underline{\Sigma})$, it holds that 
  $$\mu(\underline{S}_1\vee\underline{S}_2)+\mu(\underline{S}_1\wedge\underline{S}_2)=\mu(\underline{S}_1)+\mu(\underline{S}_2)$$
\end{enumerate}
where the addition, just like the meet and the join for sub-objects, is defined as a stagewise operation.

These conditions also imply that $\mu(\underline{0})=0$, where $\underline{0}$ is the subobject of $\underline{\Sigma}$ which assigns the empty set to each context.
\end{definition}
Note we have abused notation slightly in the above definition by writing $\mu$ both for the measure and for its contextual components.

In this text we will mostly be concerned with a particular type of von Neumann algebras, the algebras of bounded linear operators on finite dimensional Hilbert
spaces (i.e. matrix algebras).  For these algebras the unit norm positive linear functionals can be identified with the density matrices: to each density matrix $\rho\in M_n$, we can associate the functional
\begin{align*}
A &\longmapsto \mathrm{Tr}(\rho A), \ \ \  \forall A\in M_n
\end{align*}
and moreover every positive linear functional of unit norm is of this form in the finite dimensional setting. With this in mind, when talking about matrix algebras we shall refer to the density matrices as states on those algebras.

\begin{definition}
Given a state $\rho$ on the matrix algebra $M_n$, it is straightforward to define its associated measure:
\begin{align*}
 \mu_\rho: \mathrm{Sub}_{\mathrm{cl}}(\underline{\Sigma}^{M_n}) & \longrightarrow \Gamma \underline{[0,1]^{\succeq}}\\
 \underline{S}=(\underline{S}_V)_{V\in\mathcal{V}(M_n)} &\longmapsto  \mu_\rho(\underline{S}):\mathcal{V}(M_n)\rightarrow [0,1] \\
 & \ \ \ \ \ \ \ \ \ \ \ \ \ \ \ \ \ \ \ V\ \ \ \ \mapsto  \mathrm{Tr}(\rho P_{\underline{S}_V})
\end{align*}
where $P_{\underline{S}_V}=\alpha_V^{-1}(\underline{S}_V)$. 
\end{definition}

One can easily check that the function $\mu_\rho(\underline{S})$ is order reversing and that $\mu_\rho$ satisfies the two properties required in the definition of a measure. This
is explicitly done in \cite{ms}.

On the other hand, an abstract measure on the spectral presheaf associated to any given algebra $N$, determines a unique state of $N$, provided $N$ contains no direct summand of type$I_2$. The proof of this rather surprising result uses a generalized version of Gleason's theorem, which can be found in \cite{26}. 

\begin{definition}
A finitely additive probability measure $m$ on the projections of a von Neumann algebra $N$ is a map
$$m:\mathcal{P}(N)\rightarrow [0,1]$$
such that  $m(I)=1$ and if $P$ and $Q$ are orthogonal projections then $$m(P\vee Q)=m(P+Q)=m(P)+m(Q)$$
\end{definition}

\begin{theorem}[Gleason]\label{gleason}
Each finitely additive probability measure on the projections of a von Neumann algebra without type $I_2$ summands, can be uniquely extended to a state on that algebra.
\end{theorem}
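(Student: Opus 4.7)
The plan is to reduce the statement to the already-known forms of Gleason's theorem on each summand of the type decomposition of $N$, and then to promote the resulting functional on projections to a bounded linear functional on all of $N$ via the spectral theorem.

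First I would write $N = N_I \oplus N_{II} \oplus N_{III}$, and decompose the type $I$ part further into its $I_n$ components, where by hypothesis none of them is $I_2$. Since the central projections cutting out these summands are mutually orthogonal and sum to the identity, the finitely additive probability measure $m$ restricts to a (sub-normalised) finitely additive measure on the projections of each summand, and it suffices to produce a positive normal functional on each summand (with the appropriate total mass) and add them.

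Next, on each summand I would invoke the corresponding known version of Gleason's theorem. For type $I_n$ with $n \geq 3$, and for $I_\infty$, the classical Gleason theorem yields a positive trace-class operator $\rho$ such that $m(P) = \mathrm{Tr}(\rho P)$ for every projection in that summand. For the type $II$ and type $III$ summands I would appeal to the extensions due to Christensen and Yeadon (the content of the generalised Gleason theorem cited as [26]), which produce the desired positive normal functional directly. Summing these pieces, weighted by the measures of the central projections, yields a candidate state $\omega$ on $N$ which agrees with $m$ on projections: positivity and unit norm follow from $m(I)=1$ together with the positivity of each local functional.

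It remains to extend $\omega$ from projections to all of $N$ and to check uniqueness. By the spectral theorem every self-adjoint $A \in N$ is a uniform limit of real linear combinations of its spectral projections, so setting $\omega(A) := \lim \sum_i \lambda_i \, m(P_i)$ for such an approximating sequence gives a well-defined $\mathbb{R}$-linear map on the self-adjoint part of $N$; complexifying produces a bounded linear functional whose values on $\mathcal{P}(N)$ recover $m$, and the same argument shows that any two extensions agreeing on projections must coincide. The main obstacle is the generalised Gleason theorem for the type $II$ and type $III$ summands: the classical proof on $B(H)$ uses the three-dimensional geometry of the projective space and continuity of frame functions on $S^2$ in an essential way, and adapting it requires the delicate trace-based techniques of Yeadon in the semifinite case and the more subtle $C^*$-algebraic arguments of Christensen in the purely infinite case. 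Since the paper invokes [26] for these, I would use them as black boxes in the write-up and focus the exposition on the reduction across type components and on the spectral-theorem extension to self-adjoint operators.
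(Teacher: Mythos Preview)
The paper does not prove this theorem at all: it is stated as a cited result (reference [26], Maeda's survey on the generalised Gleason theorem) and then used as a black box to pass from measures on the spectral presheaf to states. So there is nothing to compare your argument against; any correct sketch would already go beyond what the paper does.

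As for your sketch itself, the overall strategy of decomposing along central type projections and quoting the appropriate Gleason-type result on each summand is the standard one, and the uniqueness argument via the spectral theorem is fine. Two points of looseness are worth flagging. First, you omit the abelian (type $I_1$) summand, which needs a separate (easy) remark: a finitely additive measure on the projections of an abelian von Neumann algebra extends to a state by the Riesz representation/spectral theorem directly, without any Gleason-type input. Second, your third paragraph is slightly tangled: the Christensen and Yeadon results (and classical Gleason on the type $I$ factors) already hand you a \emph{state}, not merely values on projections, so there is nothing left to ``extend'' --- the spectral-theorem step is needed only for uniqueness, not for existence. Finally, be aware that by invoking [26] for the type $II$ and $III$ parts you are essentially citing the very theorem under discussion; a self-contained write-up would instead cite Christensen and Yeadon separately and explain how their results assemble, which is precisely what Maeda's survey does.
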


Using this powerful result we can show that each measure on the spectral presheaf uniquely determines a state on the corresponding algebra by showing that such a measure determines a unique finitely additive probability measure on the projections of the respective algebra. This has been done by D\"{o}ring in \cite{ms}, and we will reproduce his proof in the remainder of this section.

Given a measure $\mu$ on the spectral presheaf $\underline{\Sigma}$ associated to some von Neumann algebra $N$, let $\underline{S}$ be a clopen subobject of $\underline{\Sigma}$. From Proposition \ref{alpha} we know that for each context $V$ there exists an isomorphism $\alpha_V$ between $\mathcal{P}(V)$ and $Cl(\underline{\Sigma}_V)$. If $P=\alpha_V^{-1}(\underline{S}_V)$ we define 
$$m(P)=\mu(\underline{S})(V)=\mu(\underline{S}_V)$$
We have to show that this does not depend on the choice of the subobject $\underline{S}$ and the context $V$, i.e. we must show that if $\underline{\tilde{S}}$ is another subobject of $\underline{\Sigma}$ and $\tilde{V}$ is a context such that 
$\alpha_V^{-1}(\underline{S}_V)=\alpha_{\tilde{V}}^{-1}(\underline{\widetilde{S}}_{\tilde{V}})$
then $\mu(\underline{S}_V)=\mu(\underline{\tilde{S}}_{\tilde{V}})$.
For this we will need two intermediate results.

\begin{lemma}
If $\underline{S}$ is a clopen subobject of $\underline{\Sigma}$ and $V'\subseteq V$ are two contexts such that $P$ is contained in both $V$ and $V'$ and $\alpha_V^{-1}(\underline{S}_V)=\alpha_{V'}^{-1}(\underline{S}_{V'})=P$, then $\mu((\underline{S}_V)=\mu((\underline{S}_{V'})$.
\end{lemma}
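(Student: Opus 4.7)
The plan is to exploit two structural features of measures on the spectral presheaf: (i) the values $V \mapsto \mu(\underline{S}_V)$ form an order-reversing function, so automatically $\mu(\underline{S}_{V'}) \geq \mu(\underline{S}_V)$ whenever $V' \subseteq V$, and (ii) the modularity identity $\mu(\underline{S}_1 \vee \underline{S}_2) + \mu(\underline{S}_1 \wedge \underline{S}_2) = \mu(\underline{S}_1) + \mu(\underline{S}_2)$ holds stagewise. One inequality is therefore free, and the task reduces to producing a matching reverse inequality by pitting $\underline{S}$ against a suitable ``complementary'' subobject at the two stages $V$ and $V'$.

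The natural candidate for the complement is the daseinised projection $\underline{\delta(P^\perp)}$, where $P^\perp = 1 - P$. Since $P$ lies in both $V$ and $V'$, so does $P^\perp$, hence $\delta^o(P^\perp)_V = P^\perp = \delta^o(P^\perp)_{V'}$ and therefore $\underline{\delta(P^\perp)}_V = \alpha_V(P^\perp)$ and $\underline{\delta(P^\perp)}_{V'} = \alpha_{V'}(P^\perp)$. Using that $\alpha_V$ and $\alpha_{V'}$ are lattice isomorphisms (Proposition~\ref{alpha}) together with $P \wedge P^\perp = 0$ and $P \vee P^\perp = 1$, we obtain at both stages
\begin{align*}
(\underline{S} \vee \underline{\delta(P^\perp)})_V &= \underline{\Sigma}_V, & (\underline{S} \wedge \underline{\delta(P^\perp)})_V &= \emptyset,\\
(\underline{S} \vee \underline{\delta(P^\perp)})_{V'} &= \underline{\Sigma}_{V'}, & (\underline{S} \wedge \underline{\delta(P^\perp)})_{V'} &= \emptyset.
\end{align*}

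Plugging these into the modularity identity and using $\mu(\underline{\Sigma}) = 1_{\mathcal{V}(N)}$ and $\mu(\underline{0}) = 0$ yields, at both stages $V$ and $V'$, the equations $\mu(\underline{S}_V) + \mu(\underline{\delta(P^\perp)}_V) = 1$ and $\mu(\underline{S}_{V'}) + \mu(\underline{\delta(P^\perp)}_{V'}) = 1$. Subtracting gives
\[
\mu(\underline{S}_{V'}) - \mu(\underline{S}_V) = \mu(\underline{\delta(P^\perp)}_V) - \mu(\underline{\delta(P^\perp)}_{V'}).
\]
By the order-reversing property of global sections of $\underline{[0,1]^\succeq}$ applied to both $\mu(\underline{S})$ and $\mu(\underline{\delta(P^\perp)})$, the left-hand side is $\geq 0$ while the right-hand side is $\leq 0$. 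Hence both sides vanish and $\mu(\underline{S}_V) = \mu(\underline{S}_{V'})$.

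The only real conceptual step is recognising that the daseinisation of $P^\perp$ provides a genuine clopen subobject that happens to behave like a set-theoretic complement of $\underline{S}$ precisely at the two stages of interest; everywhere else $\underline{\delta(P^\perp)}$ may be larger, but this is irrelevant because modularity is evaluated stagewise. No delicate analysis of restriction maps or Gelfand spectra beyond what is already recorded in Proposition~\ref{alpha} is needed. \qed
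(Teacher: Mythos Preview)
Your proof is correct and follows essentially the same route as the paper: you introduce the complementary clopen subobject $\underline{\delta(I-P)}$ (which the paper calls $\underline{S^c}$ and cites $\underline{\delta(I-P)}$ as the canonical example), use modularity together with $\mu(\underline{\Sigma})=1$ to obtain $\mu(\underline{S})(V)+\mu(\underline{\delta(P^\perp)})(V)=1$ at both $V$ and $V'$, and then let the order-reversing property of both global sections force equality. The only cosmetic difference is that you phrase the final step as a subtraction yielding a quantity that is simultaneously $\geq 0$ and $\leq 0$, whereas the paper compares the two equations directly; the content is identical.
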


\begin{proof}
Since the maximal projection $I$ is contained in every context, it follows that $I-P\in V',V$. Let $\underline{S^c}$ be another clopen subobject such that 
$$\alpha_V^{-1}(\underline{S^c}_V)=\alpha_{V'}^{-1}(\underline{S^c}_{V'})=I-P$$
Such a subobject certainly exists: $\underline{\delta(I-P)}$, for example, satisfies the above property.

Since every $\alpha$ is a lattice isomorphism, we have
\begin{align*}
(\underline{S}\wedge \underline{S^c})_V=0_V=\emptyset \ \  &, \ \ \ \ (\underline{S}\wedge \underline{S^c})_{V'}=0_{V'}=\emptyset \\
(\underline{S}\vee\underline{S^c})_V=\underline{\Sigma}_V \ \ &, \ \ \ \ (\underline{S}\vee\underline{S^c})_{V'}=\underline{\Sigma}_{V'}
\end{align*}

Using the two defining properties of a measure $\mu$ we obtain
\begin{align*}
1&=\mu(\underline{\Sigma})(V)\\
&=\mu(\underline{S}\vee\underline{S^c})(V)\\
&=\mu(\underline{S})(V)+\mu(\underline{S^c})(V)-\mu(\underline{S}\wedge\underline{S^c})(V)
\end{align*}
Since the last term vanishes we obtain that $\mu(\underline{S})(V)+\mu(\underline{S^c})(V)=1$. Similarly, we can also deduce that $\mu(\underline{S})(V')+\mu(\underline{S^c}(V')=1$. But $\mu(S):\mathcal{V}(N)\rightarrow [0,1]$ is an order-reversing function, hence
\begin{align*}
&\mu(\underline{S})(V')\geq \mu(\underline{S})(V)\\
&\mu(\underline{S^c})(V')\geq \mu(\underline{S^c})(V)
\end{align*}
This implies that in fact $\mu(\underline{S})(V')=\mu(\underline{S})(V)$ and $\mu(\underline{S^c})(V')=\mu(\underline{S^c})(V)$, which completes our proof. \qed
\end{proof}

\begin{lemma}
If $\underline{S}$ and $\underline{\tilde{S}}$ are two subobjects which coincide at $V$, i.e. if $\underline{S}_V=\underline{\tilde{S}}_V$, then $\mu(\underline{S})(V)=\mu(\underline{\tilde{S}})(V)$.
\end{lemma}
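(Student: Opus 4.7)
My strategy is to deduce the claim from the additivity axiom together with the previous lemma, by reducing to the special case where one subobject is contained in the other. Set $\underline{M}:=\underline{S}\wedge\underline{\tilde{S}}$ and $\underline{J}:=\underline{S}\vee\underline{\tilde{S}}$. Because meets and joins of sub-objects are defined stagewise, the hypothesis $\underline{S}_V=\underline{\tilde{S}}_V$ forces $\underline{M}_V=\underline{J}_V=\underline{S}_V$. The additivity axiom at $V$ then gives
\[
\mu(\underline{J})(V)+\mu(\underline{M})(V)=\mu(\underline{S})(V)+\mu(\underline{\tilde{S}})(V),
\]
so it suffices to prove the sub-claim: whenever $\underline{A}\subseteq\underline{B}$ and $\underline{A}_V=\underline{B}_V$, one has $\mu(\underline{A})(V)=\mu(\underline{B})(V)$. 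Applying this to the pairs $(\underline{M},\underline{S})$ and $(\underline{M},\underline{\tilde{S}})$ will then yield $\mu(\underline{S})(V)=\mu(\underline{M})(V)=\mu(\underline{\tilde{S}})(V)$.

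For the sub-claim I would imitate the complementation trick from the previous lemma. Let $P:=\alpha_V^{-1}(\underline{A}_V)$ and set $\underline{C}:=\underline{\delta(I-P)}$, so that $\underline{C}_V=S_{I-P}$. Then $(\underline{A}\vee\underline{C})_V=(\underline{B}\vee\underline{C})_V=\underline{\Sigma}_V$ and $(\underline{A}\wedge\underline{C})_V=(\underline{B}\wedge\underline{C})_V=\emptyset$. Applying the additivity axiom at $V$ to the pairs $(\underline{A},\underline{C})$ and $(\underline{B},\underline{C})$, and then invoking the previous lemma to transport the measures of the resulting joins and meets to the minimal context $V_P:=\{P,I-P\}''\subseteq V$ (at which the relevant projections would be $I$ and $0$, matching those at $V$), the two additivity equations should collapse to $\mu(\underline{A})(V)=1-\mu(\underline{C})(V)=\mu(\underline{B})(V)$.

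The main obstacle is that the previous lemma demands the projection at the smaller context to agree with the projection at $V$, whereas the subfunctor condition on $\underline{A}$ and $\underline{B}$ only forces $\underline{A}_{V_P},\underline{B}_{V_P}\supseteq S_P^{V_P}$ and leaves the alternative $\underline{A}_{V_P}=\underline{\Sigma}_{V_P}$ open; in that latter case $(\underline{A}\wedge\underline{C})_{V_P}=S_{I-P}^{V_P}$ corresponds to the projection $I-P\neq 0$ and the previous lemma cannot be applied directly. My plan to get around this is to replace $\underline{A}$ and $\underline{B}$ from the outset by $\underline{A}\wedge\underline{\delta(P)}$ and $\underline{B}\wedge\underline{\delta(P)}$: since $P\in V$ these refinements preserve the $V$-component, but they pin the $V_P$-component down to $S_P^{V_P}$, so that the projection-matching hypothesis of the previous lemma is met at every stage where the complementation argument is invoked.
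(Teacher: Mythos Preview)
Your route is far more elaborate than the paper's and does not actually close. The paper's argument is a single application of additivity at $V$: since $(\underline{S}\vee\underline{\tilde S})_V=\underline{S}_V\cup\underline{\tilde S}_V=\underline{S}_V$ and $(\underline{S}\wedge\underline{\tilde S})_V=\underline{S}_V\cap\underline{\tilde S}_V=\underline{S}_V$, the right-hand side of the additivity identity becomes $2\mu(\underline{S})(V)$, so $\mu(\underline{S})(V)+\mu(\underline{\tilde S})(V)=2\mu(\underline{S})(V)$ and the claim follows. No sub-claim, no complementation, no previous lemma are needed.

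There are two genuine gaps in your scheme. First, even granting that the previous lemma transports $\mu(\underline{A}\vee\underline{C})(V)$ and $\mu(\underline{A}\wedge\underline{C})(V)$ to their values at $V_P$, you never justify that those values are $1$ and $0$. The previous lemma only asserts equality of the measure of a \emph{single} subobject at two contexts; it says nothing about the numerical value. Knowing that $(\underline{A}\vee\underline{C})_{V_P}=\underline{\Sigma}_{V_P}$ does not give $\mu(\underline{A}\vee\underline{C})(V_P)=1$ unless you already know that the value depends only on the local component --- which is exactly the lemma you are proving (specialised to the pair $\underline{A}\vee\underline{C}$ and $\underline{\Sigma}$). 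Second, your proposed fix --- replacing $\underline{A},\underline{B}$ by $\underline{A}\wedge\underline{\delta(P)},\underline{B}\wedge\underline{\delta(P)}$ --- alters the subobjects whose measures you are comparing. To recover the original conclusion you would need $\mu(\underline{A})(V)=\mu(\underline{A}\wedge\underline{\delta(P)})(V)$, and since these two subobjects agree at $V$ but may differ elsewhere, this is again an instance of the lemma at hand. The detour through the sub-claim therefore does not reduce the problem; it merely reproduces it.
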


\begin{proof}
From the second defining property of a measure $\mu$ we obtain that
\begin{align*}
\mu(\underline{S})(V)+\mu(\underline{\tilde{S}})(V)&=\mu(\underline{S}\vee\underline{\tilde{S}})(V)+\mu(\underline{S}\wedge\underline{\tilde{S}})(V)\\
&=\mu((\underline{S}\vee\underline{\tilde{S}})_V)+\mu((\underline{S}\wedge\underline{\tilde{S}})_V)\\
&=\mu(\underline{S}_V\cup\underline{\tilde{S}}_V)+\mu(\underline{S}_V\cap\underline{\tilde{S}}_V)\\
&=\mu(\underline{S}_V)+\mu(\underline{S}_V)\\
&=\mu(\underline{S})(V)+\mu(\underline{S})(V)
\end{align*}

Which implies that $\mu(\underline{S})(V)=\mu(\underline{\tilde{S}})(V)$.\qed
\end{proof}

Now assume that $\underline{S}$ and $\underline{\tilde{S}}$ are two clopen subobjects of $\underline{\Sigma}$ and $V$ and $\tilde{V}$ are two contexts such that $\underline{S}_V$ and $\underline{\tilde{S}}_{\tilde{V}}$ correspond to the same projection $P\in V,\tilde{V}$. Then we must have that $P$ also belongs to $V\cap\tilde{V}$. We know that the clopen subobject $\underline{\delta(P)}$ coincides with $\underline{S}$ at $V$ and it also coincides with $\underline{\tilde{S}}$ at $\tilde{V}$. Moreover, $\underline{\delta(P)}_{V\cap\tilde{V}}\subseteq \underline{\Sigma}_{V\cap\tilde{V}}$ and  $\alpha_{V\cap\tilde{V}}^{-1}(\underline{\delta(P)}_{V\cap\tilde{V}})=P$. From the previous two lemmas we obtain that

\begin{align*}
\mu(\underline{S})(V)&=\mu(\underline{\delta(P)})(V)\\
&=\mu(\underline{\delta(P)})(V\cap\tilde{V})\\
&=\mu(\underline{\delta(P)})(\tilde{V})\\
&=\mu(\underline{\tilde{S}})(\tilde{V})
\end{align*}

This shows that the value $m(P)=\mu(\underline{S})(V)$ is well defined. For any $V$, the projection corresponding to $\underline{\Sigma}_V$ is the maximal projection, $I$. So from the first defining property of a measure $\mu$, we must have $$m(I)=\mu(\underline{\Sigma})(V)=1$$ 
Finally, let $P$ and $Q$ be two orthogonal projections and let $V$ be a context that contains both $P$ and $Q$. Let $\underline{S^P}$ and $\underline{S^Q}$ be two subobjects such that $\alpha_V^{-1}(\underline{S^P}_V)=P$ and $\alpha_V^{-1}(\underline{S^Q}_V)=Q$. Then $(\underline{S^P}\vee\underline{S^Q})_V$ corresponds to $P\vee Q$ and we obtain
\begin{align}
m(P\vee Q)&=\mu(\underline{S^P}\vee \underline{S^Q})(V)\\
&=\mu(\underline{S^P})(V)+\mu(\underline{S^Q})(V)+\mu(\underline{S^P}\wedge\underline{S^Q})(V)\\
&=\mu(\underline{S^P})(V)+\mu(\underline{S^Q})(V)\\
&=m(P)+m(Q)
\end{align}

This shows that the map $m:\mathcal{P}\rightarrow [0,1]$ is indeed a finitely additive probability measure, and so from the generalised version of Gleason's theorem we know that $m$ extends to a unique state $\rho_m$ of the algebra $N$.

In particular this implies that when the algebra $N$ is a finite dimensional matrix algebra there is a bijective correspondence between density matrices and measures on the corresponding spectral presheaf.

\section{Contextual entropy}\label{ctxt}

\subsection{Measures and partial traces}

We saw that, given a measure $\mu$ on the clopen subobjects of a spectral presheaf, if we fix a subobject $\underline{S}$ of $\underline{\Sigma}$ we obtain a map from
$\mathcal{V}(N)$ to $[0,1]$. We can adopt a different perspective and instead of looking at a fixed subobject we can look at a fixed context $V$. There is a lattice isomorphism $\alpha_V$ between the projections in $V$ and the clopen subsets of $\underline{\Sigma}_V$. Hence from $\mu$ we can also obtain a map 
\begin{align*}
\mu|_{_V}:\mathcal{P}(V)&\longrightarrow[0,1]\\
 P&\longmapsto \mu(S_P)
\end{align*}
where $S_P=\alpha_V(P)\subseteq \underline{\Sigma}_V$.

Using this new perspective, we can show that measures on the spectral presheaf associated to a matrix algebra behave well with respect to the partial trace. This result has a certain physical significance. We have already seen that there is a bijective correspondence between states and probability measures, and we now show that moreover these measures capture the essential information theoretic property of the partial trace in a natural way. Thus, if we are given a measure corresponding to a composite state, we can obtain its partial traces in a direct way by simply considering its restrictions to contexts of a particular form. Intuitively, we would expect these contexts to be precisely those which only encode information related to the first subsystem (if we want to trace out the second one) or vice versa, and we will see that this will indeed be the case.

Note also that this result will be useful for us later on, when discussing the subadditivity property of our contextual entropy.

\begin{proposition}
Consider a state $\rho$ on the matrix algebra $\mathcal{M}_{nm}= \mathcal{M}_n\otimes \mathcal{M}_m$. Let $\rho_1=\mathrm{Tr}_2(\rho)\in\mathcal{M}_n$ and $\rho_2=\mathrm{Tr}_1(\rho)\in\mathcal{M}_m$ be the partial traces of $\rho$. Then if $V\in \mathcal{V(M}_n)$ and $\mathbb{C}I_m$ denotes the trivial subalgebra of $\mathcal{M}_m$ we have
$$\mu_\rho|_{_{V\otimes \mathbb{C}I_m}}=\mu_{\rho_1}|_{_V}$$
Conversely, if $W\in \mathcal{V(M}_m)$ and $\mathbb{C}I_n$ denotes the trivial subalgebra of $\mathcal{M}_n$ we have
$$\mu_\rho|_{_{\mathbb{C}I_n\otimes W}}=\mu_{\rho_2}|_{_W}$$
\end{proposition}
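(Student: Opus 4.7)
The plan is to unwind both sides using the definition of $\mu_\rho$ and then reduce the statement to the defining property of the partial trace.

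First, I would observe that, under the tensor decomposition $\mathcal{M}_{nm} = \mathcal{M}_n \otimes \mathcal{M}_m$, the projection lattice of the context $V \otimes \mathbb{C}I_m$ is in bijective, lattice-isomorphic correspondence with $\mathcal{P}(V)$ via $P \mapsto P \otimes I_m$; in particular every clopen subset of $\underline{\Sigma}_{V \otimes \mathbb{C}I_m}$ is of the form $\alpha_{V \otimes \mathbb{C}I_m}(P \otimes I_m)$ for a unique $P \in \mathcal{P}(V)$. Consequently it suffices to prove that for every $P \in \mathcal{P}(V)$,
$$\mu_\rho|_{V \otimes \mathbb{C}I_m}(P \otimes I_m) = \mu_{\rho_1}|_V(P).$$

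Next, by the definition of the state-associated measure, the left-hand side equals $\mathrm{Tr}(\rho (P \otimes I_m))$ while the right-hand side equals $\mathrm{Tr}(\rho_1 P)$. The identity
$$\mathrm{Tr}\bigl(\rho (A \otimes I_m)\bigr) = \mathrm{Tr}\bigl((\mathrm{Tr}_2 \rho)\, A\bigr),$$
holding for all $A \in \mathcal{M}_n$, is the characterising property of the partial trace $\mathrm{Tr}_2$; specialised to $A = P$ it gives the desired equality at once. The second claim of the proposition, concerning $\mu_\rho|_{\mathbb{C}I_n \otimes W}$, follows by exactly the same argument with the roles of the two tensor factors interchanged, using $\mathrm{Tr}(\rho (I_n \otimes Q)) = \mathrm{Tr}((\mathrm{Tr}_1 \rho)\, Q) = \mathrm{Tr}(\rho_2 Q)$ for $Q \in \mathcal{P}(W)$.

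There is no substantial obstacle here: the only point worth recording explicitly is the identification $\mathcal{P}(V) \cong \mathcal{P}(V \otimes \mathbb{C}I_m)$, since without it the two sides of the claimed equality live, a priori, on different projection lattices (and hence on spectral spaces sitting in different spectral presheaves). Once that identification is in place, the proposition is essentially a translation of the defining property of the partial trace into the language of measures on the spectral presheaf.
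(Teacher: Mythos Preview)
Your proposal is correct and follows essentially the same approach as the paper: both arguments first record the lattice isomorphism $\mathcal{P}(V)\cong\mathcal{P}(V\otimes\mathbb{C}I_m)$ via $P\mapsto P\otimes I_m$, then unwind the definition of the state-associated measure and invoke the defining property of the partial trace to obtain $\mathrm{Tr}(\rho(P\otimes I_m))=\mathrm{Tr}(\rho_1 P)$, with the second claim handled symmetrically.
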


\begin{proof}
To see that this is indeed the case note first that there is a lattice isomorphism between the domains of definition of $\mu_\rho|_{_{V\otimes \mathbb{C}I_m}}$ and $\mu_{\rho_1}|_{_V}$ which takes $P\in\mathcal{P}(V)$ to $P\otimes I_m\in\mathcal{P}(V\otimes \mathbb{C}I_m)$. Then using the definition of measures for states on matrix algebras and the defining property of the partial trace, we have that 
$$\mu_\rho|_{_{V\otimes \mathbb{C}I_m}}(P\otimes I_m)=Tr(\rho\cdot P\otimes I_m)=Tr(\rho_1\cdot P)=\mu_{\rho_1}|_{_V}(P),\ \ \forall P\in\mathcal{P}(V\otimes \mathbb{C}I_m)$$
and similarly for the second statement.\qed 
\end{proof}

Finally, the fact that $\mu$ is a measure implies several properties for $\mu|_{_V}$ which hold for all contexts $V\in\mathcal{V}(N)$, and which we shall state below:
\begin{enumerate}
 \item $\mu|_{_V}(I)=1$ and $\mu|_{_V}(0)=0$
 \item $\mu|_{_V}(P\vee Q)+\mu|_{_V}(P\wedge Q)=\mu|_{_V}(P)+\mu|_{_V}(Q)$
 \item in particular, if $P$ and $Q$ are orthogonal then $P\wedge Q=0$ and $P\vee Q=P+Q$ and hence $$\mu|_{_V}(P+Q)=\mu|_{_V}(P)+\mu|_{_V}(Q)$$
 \item if $P\leq Q$ then $\mu|_{_V}(P)\leq \mu|_{_V}(Q)$
\end{enumerate}

These properties imply that $\mu|_{_V}$ is a finitely additive probability measure on the lattice of projections of $V$.

\subsection{The entropy of a measure}

We saw that in classical probability theory we can define Shannon entropy as a function on the set of all probability distributions. We will see now how to associate a distinguished probability distribution to each context of a von Neumann algebra of bounded operators on finite dimensional Hilbert space, given a state on the system described by that algebra in the form of a measure on its associated spectral presheaf. Once this is done, we will be able to associate to each context its corresponding Shannon entropy, and moreover we will see that this collection of Shannon entropies fits together in a nice way and gives a global section of a certain real-number presheaf. This is consistent with the basic idea of the topos approach, that of putting together the information obtained from each classical perspective on a quantum system. We will see in later sections that by keeping track of all classical entropies associated to a quantum state we can not only retrieve that state's von Neumann entropy, but also reconstruct the state itself.

\begin{definition}
Let $H$ be an Hilbert Space, $\mathcal{B}(H)$ the algebra of bounded operators in $H$ and $\mathcal{F}\subseteq \mathcal{B}(H)$. The von Neumann commutant of $\mathcal{F}$, usually denoted by $\mathcal{F}'$, is the subset of $\mathcal{B}(H)$ consisting of all elements that commute with every element of $\mathcal{F}$, that is
$$\mathcal{F}'=\{T\in \mathcal{B}(H)~|~TS=ST,\  \forall S\in\mathcal{F}\}$$

The von Neumann double commutant $\mathcal{F}$ of is just $(\mathcal{F}')'$ and is usually denoted by $\mathcal{F}''$.
\end{definition}

If we consider a set of orthogonal rank-one projections $\{P_1,\ldots,P_n\}''$, their double commutant can be shown to be simply $\mathbb{C}P_1+\ldots+\mathbb{C}P_n$.

It is known that each context $V$ can be generated via the von Neumann double commutant construction in a unique way from a set of pairwise orthogonal projections which add up to the identity. If we denote this canonical set of projections by $\{P_1, P_2,\ldots,P_k\}$ then $(\mu|_{_V}(P_1), \mu|_{_V}(P_2),\ldots,\mu|_{_V}(P_k))$ is a probability distribution. Hence to each context $V$ we can assign the Shannon entropy of its associated probability distribution:
$$\mathrm{Sh}(\mu|_{_V}(P_1), \mu|_{_V}(P_2),\ldots,\mu|_{_V}(P_k))=-\sum_{i=1}^k \mu|_{_V}(P_i)\ln \mu|_{_V}(P_i)$$
If $V'\supseteq V$ then $V' = \{Q^1_1,\ldots,Q^1_{l_1}, Q^2_1,,\ldots Q^2_{l_2},\ \ldots,\ Q^k_1,\ldots,Q^k_{l_k}\}''$, where the $Q^j_i$s are pairwise orthogonal and
$$\sum_{i=1}^{k_j} Q^j_i=P_j$$

The Shannon entropy associated to $V'$ is related to the Shannon entropy associated to $V$ via the recursion formula:
$$\mathrm{Sh}(V')=\mathrm{Sh}(V)+\sum_{i=1}^k \mu|_{_V}(P_i)\cdot \mathrm{Sh}\left(\frac{\mu|_{_{V'}}(Q^i_1)}{\mu|_{_V}(P_i)}, \frac{\mu|_{_{V'}}(Q^i_2)}{\mu|_{_V}(P_i)},\ldots,
\frac{\mu|_{_{V'}}(Q^i_{l_i})}{\mu|_{_V}(P_i)}\right)$$

Since Shannon entropy is non-negative, it follows that $\mathrm{Sh}(V')\geq\mathrm{Sh}(V)$ and this enables us to give the following definition for the entropy of a measure (and hence of a quantum state).

\begin{definition}
If $\mu$ is a measure on the clopen subobjects of a presheaf $\underline{\Sigma}$ then the entropy $E(\mu)$ associated to $\mu$ is a global section of the presheaf $\underline{[0,\ln n]^\preceq}$ which at a context $V=\{P_1, P_2,\ldots,P_k\}''$ has the value $$E(\mu)|_{_V}=\mathrm{Sh}(\mu|_{_V}(P_1),
\mu|_{_V}(P_2),\ldots,\mu|_{_V}(P_k))=-\sum_{i=1}^k \mu|_{_V}(P_i)\ln \mu|_{_V}(P_i)$$
Note that if the $V$ is a $k$-dimensional context then the value taken by $E(\mu)$ at $V$ is less then or equal to $\ln k$, and hence for an $n$-dimensional matrix algebra, the maximal value taken by $E(\mu)$ at any context is $\ln n$. Therefore contextual entropy can be seen as a mapping defined on the set of measures associated to a spectral presheaf:
$$E:\mathcal{M}(\underline{\Sigma})\longrightarrow \Gamma \underline{[0,\ln n]^{\preceq}}\ \  .$$
\end{definition}

Notice that although there is a bijective correspondence between states of a von Neumann algebra and measures on the spectral presheaf associated to it, the above definition does
not make any direct reference to the quantum state which the measure corresponds to.

\subsection{Properties of the contextual entropy}

\subsubsection{Extracting the von Neumann entropy}\label{VNeu}
Given a density matrix $\rho$ there exists at least one orthonormal basis of Hilbert space with respect to which $\rho$ is diagonal. Such a basis corresponds to a set of
one-dimensional pairwise orthogonal projections $\{P_1,\ldots,P_n\}$, which in turn determine a maximal context $V_\rho$ via the double commutant construction. It is easy to check
that the eigenvalues $\{\lambda_i\}_{i=1}^n$ of $\rho$ satisfy $\lambda_i=Tr(\rho P_i)$. Hence the value assigned to the entropy of the measure $\mu_\rho$ at any context $V_\rho$
obtained through the above procedure, is just the von Neumann entropy of the state $\rho$:
\begin{align*}
E(\mu_\rho)_{_{V_\rho}} &=-\sum_{i=1}^n \mu_\rho|_{_{V_\rho}}(P_i)\ln \mu_\rho|_{_{V_\rho}}(P_i)\\ 
&=-\sum_{i=1}^n \mathrm{Tr}(\rho P_i)\ln\mathrm{Tr}(\rho P_i)\\
&=-\sum_{i=1}^n \lambda_i\ln\lambda_i=\mathrm{VN}(\rho)
\end{align*}

One can prove (see for example Wehner and Short \cite{stephanie}, Appendix B) that for any other maximal context $V$, the associated Shannon entropy is strictly larger than the Shannon entropy associated with $V_\rho$. This is a consequence of the so-called Schur-Horn Theorem \cite{horn} and the Schur-concavity of Shannon entropy. Thus, the von Neumann entropy of $\rho$ is equal to the minimal value of the contextual entropy $E_{\rho}(V)$, when $V$ is varying over the set of maximal contexts.

Given the contextual entropy map, the problem of finding a context for which this minimum is attained is equivalent to the problem of finding the point at which a real-valued function on the group of unitaries $\mathcal{U}(n)$ attains its minimal value. To see why this is the case, let $V_0:=\{{E}_1,\ldots,{E}_n\}''$ denote the maximal context determined by projections which are diagonal with respect to the computational basis. Any other maximal context $V=\{P_1,\ldots, P_n\}''$ can be written as $U.V_0:=\{U{E}_1U^{-1},\ldots,U{E}_nU^{-1}\}''$ for some unitary $U$. Hence if we restrict the contextual entropy map to the set $\mathcal{V}_M$ of maximal contexts, we can write $E_\rho|_{\mathcal{V}_M}(C)=E_\rho|_{\mathcal{V}_M}(U.C_0)$ and we can view this restriction as a real-valued function on the group of unitaries. It is then possible to use existing optimization algorithms \cite{traian1, traian2} in order to find, with high probability of success, the point at which this function attains its global minimum.

\subsubsection{Unitarily equivalent global sections} \label{unitarily equivalent global sections}

If we evaluate the contextual entropy of a state $\rho$ at some  context $V$ (not necessarily maximal), this will be equal to the contextual entropy of any unitarily equivalent state as long as we evaluate it at a context which is obtained from $V$ through rotation by the same unitary. That is,
$$E(\mu_\rho)_{_V}=E(\mu_{U\rho U^{-1}})_{_{U\cdot V\cdot U^{-1}}}$$

This observation has a certain physical significance. Within the Schr\"{o}dinger approach to quantum mechanics, one uses unitary transformations of a state in order to encode time evolution of that state. On the other hand, one can use Heisenberg approach to encode time evolution, and then one looks at unitary transformations of the coordinate systems in which the states are represented. Of course, the laws of physics should not depend on which interpretation of quantum mechanics we choose to follow, and this is exactly what the above equation captures.

\subsubsection{Contextual vs. Von Neumann and Shannon entropies}\label{cns}

We would like at this point to compare the properties of Von Neumann and Shannon entropies with those of the contextual entropy. The main difficulty with this attempt is the fact
that the values of the contextual entropies are not real numbers but global sections of certain real number presheaves, which may live in different topoi, i.e. they may be defined over different base categories. In some cases it is possible to work around this difficulty by adapting the definitions of order relations and algebraic operations on $\mathbb{R}$ to suit our more general framework. 

\vspace{12pt}
\noindent \textbf{1) Positivity}

Both von Neumann and Shannon entropies are positive. Shannon entropy is zero for any probability distribution in which one outcome occurs with $100$\% certainty and strictly
positive otherwise. Similarly, von Neumann entropy is zero for all pure states, and strictly positive for the others.

The contextual entropy does assign non-negative values to all contexts, hence the resulting global section can be thought of as non-negative, but it does not assign the value zero to all contexts for pure states. However, we can still recognize pure states because, as we have already seen, it is possible to determine the Von Neumann entropy from the contextual one by taking the minimum over all values assigned to maximal contexts.

The advantage of using this richer notion of entropy is that not only can we distinguish pure states from non-pure ones, but by considering all contexts at the same time we encode
sufficient information to reconstruct the pure state itself. Moreover it is possible, with a few exceptions, to reconstruct any quantum state from our contextual entropy, and we
shall see how this is done later on.

\noindent \textbf{2) Concavity}

Shannon entropy is concave: if $\vec{p}$ and $\vec{q}$ are two probability distributions then $$\mathrm{Sh}(r\cdot \vec{p}+(1-r)\cdot \vec{q})\geq
r\mathrm{Sh}(\vec{p})+(1-r)\mathrm{Sh}(\vec{q})$$ For von Neumann entropy concavity is defined by a similar formula: $$\mathrm{VN}(r\rho+(1-r)\sigma)\geq
r\mathrm{VN}(\rho)+(1-r)\mathrm{VN}(\sigma)$$

The contextual entropy satisfies a similar property. If $\rho$ and $\sigma$ are defined on the same Hilbert space $\mathcal{H}$ then for every context $V\in\mathcal{B(H)}$, if $V$
is generated by the projections $\{P_1,\ldots,P_k\}$, we have
\begin{align*}
E(\mu_{r\rho+(1-r)\sigma})_{_V}&=\mathrm{Sh}(~ \mathrm{Tr}[(r\rho+(1-r)\sigma)P_1],\, \ldots,\mathrm{Tr}[(r\rho+(1-r)\sigma)P_k]~ )\\
 &=\mathrm{Sh}(~ [r\mathrm{Tr}(\rho P_1)+(1-r)\mathrm{Tr}(\sigma P_1)],\, \ldots,r\mathrm{Tr}(\rho P_k)+(1-r)\mathrm{Tr}(\sigma P_k)~)\\
 &\geq r\mathrm{Sh}(\mathrm{Tr}(\rho P_1),\, \ldots,\mathrm{Tr}(\rho P_k))~+~(1-r)\mathrm{Sh}(\mathrm{Tr}(\sigma P_1),\, \ldots,\mathrm{Tr}(\sigma P_k))\\
 &=r\cdot E(\mu_\rho)_{_V}+(1-r)E(\mu_\sigma)_{_V}
\end{align*}

Hence we are justified to say that contextual entropy is globally concave:
$$E(\mu_{r\rho+(1-r)\sigma})\geq r\cdot E(\mu_\rho)+(1-r)E(\mu_\sigma), \ \ \forall r\in[0,1]$$

\noindent \textbf{3) Additivity and Subadditivity}

Subadditivity a property concerning composite systems. Recall that an entropy is called subadditive if the entropy of a composite system is smaller than the sum of the entropies of its parts. Both von Neumann and Shannon entropies are subadditive. We would like to obtain an inequality of the form
$$E(\mu_\rho)\leq E(\mu_{\rho_1})+E(\mu_{\rho_2})$$
where $\rho$ is the density matrix representing a composite state and $\rho_1$ and $\rho_2$ are the partial traces of $\rho$. It is not immediately clear how one could define such an inequality, since this time the terms involved are global sections of presheaves over three different base categories.
Hence in order to talk about subadditivity in a meaningful way, we must first define a suitable notion of addition between the global sections $E(\mu_{\rho_1})$ and
$E(\mu_{\rho_2})$.

In order to see how this might be done, we start by considering some context $V$ of the first subsystem and some other context $W$ of the second subsystem. If
$V=\{P_1,\ldots,P_k\}''$ and $W=\{Q_1,\ldots,Q_r\}''$, from the definition of the entropy we have
\vspace{-8pt}
$$ E(\mu_{\rho_1})|_{_V}=\sum_{i=1}^k \mathrm{Tr}(\rho_1 P_i) \ln  \mathrm{Tr}(\rho_1 P_i),$$
\vspace{-10pt}
$$ E(\mu_{\rho_2})|_{_W}=\sum_{j=1}^r \mathrm{Tr}(\rho_2 Q_j) \ln  \mathrm{Tr}(\rho_2 Q_j)$$
We can add these two numbers together, and we can use the fact that Shannon entropy is additive for independent probability distributions (i.e. $\sum_{i=1}^k p_i\ln p_i+
\sum_{j=1}^r q_j \ln q_j = \sum_{i,j} p_iq_j \ln p_iq_j$) and the fact that $ \mathrm{Tr}(\rho_1 P_i) \mathrm{Tr}(\rho_2 Q_j)= \mathrm{Tr}(\rho_1\otimes \rho_2 P_i\otimes Q_j)$ to
obtain
$$E(\mu_{\rho_1})|_{_V}+E(\mu_{\rho_2})|_{_W} = \sum_{i=1,j} \mathrm{Tr}(\rho_1\otimes \rho_2 P_i\otimes Q_j) \ln \mathrm{Tr}(\rho_1\otimes \rho_2 P_i\otimes Q_j) =
E(\mu_{\rho_1\otimes\rho_2})|_{_{V\otimes W}}$$

It makes sense then to use the following requirement for the definition of subadditivity: $E(\mu_\rho)$ should be less than or equal to $E(\mu_{\rho_1\otimes\rho_2})$ at each
context $\widetilde{V}$ of the composite system. This definition enables us to say, for instance, that the contextual entropy is additive when $\rho=\rho_1\otimes\rho_2$. Note that this is a direct consequence of the additivity property of Shannon entropy.

Even when $\rho$ is not equal to $\rho_1\otimes\rho_2$ the subadditivity property holds in split contexts (i.e. contexts of the form $V\otimes W$) as a consequence of Shannon subadditivity. Consider $\widetilde{V}=V\otimes W$, with $V$ and $W$ as above. We know that 
$$\mu_{\rho_1}(P_i) = \mu_\rho(P_i\otimes I) = \sum_{j=1}^r \mu_\rho(P_i \otimes Q_j)$$
for all $i\in\{1,\ldots,k\}$ and 
$$\mu_{\rho_2}(Q_j) = \mu_\rho(I\otimes Q_j)= \sum_{j=1}^r \mu_\rho(P_i \otimes Q_j)$$
for all $j\in\{1,\ldots,r\}$. 

Using the subadditivity property of Shannon entropy we obtain 
\begin{align*}
 E(\mu_\rho)|_{_{V\otimes W}} &= \mathrm{Sh}(P_1\otimes Q_1,\ldots,P_1\otimes Q_r, P_2\otimes Q_1,\ldots,P_2\otimes Q_r,\ldots,P_k\otimes Q_1,\ldots, P_k\otimes Q_r) \\
&\leq \mathrm{Sh}\left(\sum_{i=1}^k \mu_\rho(P_i\otimes Q_1), \sum_{i=1}^k \mu_\rho(P_i\otimes Q_2),\ldots, \sum_{i=1}^k \mu_\rho(P_i\otimes Q_r)\right) + \\
& \ \ ~ \ \mathrm{Sh}\left(\sum_{j=1}^r \mu_\rho(P_1\otimes Q_j), \sum_{j=1}^r \mu_\rho(P_2\otimes Q_j),\ldots, \sum_{j=1}^r \mu_\rho(P_k\otimes Q_j)\right)\\
&=\mathrm{Sh}\left(\mu_{\rho_1}(P_1),\mu_{\rho_1}(P_2),\ldots, \mu_{\rho_1}(P_k)\right)+\mathrm{Sh}(\mu_{\rho_2}(Q_1),\mu_{\rho_2}(Q_2),\ldots,\mu_{\rho_2}(Q_r))\\
&=E(\mu_{\rho_1})|_{_V}+E(\mu_{\rho_2})|_W=E(\mu_{\rho_1\otimes \rho_2})|_{_{V\otimes W}} 
\end{align*}

\begin{remark}
The fact that the contextual entropy is subadditive in all split contexts can be used to give a more direct proof of the subadditivity property of von Neumann entropy, which avoids using Klein's inequality: if we choose the split context $\widetilde{V}$ such that $\rho_1$ is diagonal in $W$ and $\rho_2$ is diagonal in $W$ we know from Section \ref{VNeu} that
$$\mathrm{VN}(\rho)\leq E(\mu_\rho)|_{_{V\otimes W}} \leq E(\mu_{\rho_1})|_{_V}+E(\mu_{\rho_2})|_W = \mathrm{VN}(\rho_1)+\mathrm{VN}(\rho_2)$$
\end{remark}

For contexts which are not split (which we usually call entangled contexts), the subadditivity property does not hold in general, and one can construct explicit counterexamples. This is not surprising, since the converse of the Schur-Horn lemma implies that for
any density matrix $\rho$, there is some unitary $U$ for which the diagonal of $U\rho U^{-1}$ is the maximally mixed vector $(\frac{1}{n},\frac{1}{n},\ldots,\frac{1}{n})$. Let $D_n$ denote the context generated by the set of projections $\{E_{11},\ldots,E_{nn}\}$, where we have fixed our basis such that $E_{ii}$ is the projection with the $i^{th}$ diagonal entry equal to one and all other entries equal to zero. Then

$$E(\mu_\rho)|_{_{U^{-1}\cdot D_n\cdot U}}=E(\mu_{U\rho U^{-1}})|_{_ {D_n}}=\mathrm{Sh}\left((U\rho U^{-1})_{11},\ldots,(U\rho U^{-1})_{nn}\right)=\ln n$$

There is however no guarantee that the diagonal of $U\rho_1\otimes\rho_2U^{-1}$ will also be the maximally mixed vector.

\noindent \textbf{4) Continuity}

It is possible to define a metric on each set of unitarily equivalent contexts. One can show that the contextual entropy, seen as a real-valued function on such a set of unitarily equivalent contexts, is continuous with respect to this metric.

Since the projections which generate a given context are unique up to permutations and multiplication by phase factors, these operations should not influence the distance between two contexts. If $V=\{P_1,\ldots,P_k\}''$ and $W=\{Q_1,\ldots,Q_k\}''$ are two $k$-dimensional contexts, we say that $V$ is unitarily equivalent to $W$ if there exists a unitary $U$ such that $UP_iU^{-1}=Q_{\sigma(i)}$ for some permutation $\sigma\in S_n$. Let $\mathcal{U}_{V,W}$ be the collection of all such unitaries. Each element in $\mathcal{U}_{V,W}$ is a representative of an equivalence class of unitaries in $ U(n)/U(1)^n$. 

We define the distance between two unitarily equivalent contexts as 
$$d(V,W)=\min_{\substack{U\in \mathcal{U}_{V,W}\subseteq U(n)\\ \tilde U \in [U]\in U(n)/U(1)^n}}\ ||\tilde U-I||$$
where $||T||=\max_{i,j}|T_{ij}|$. This is clearly well defined and satisfies the conditions required for a metric. 

Given a state $\rho$ and a set of unitarily equivalent contexts $\mathcal{V}$, the contextual entropy of the state $\rho$ can be seen as a function on this set:
\begin{align*}
E_\rho:\mathcal{V}&\rightarrow [0,\infty)\\
V&\mapsto E(\mu_\rho)|_{_V}
\end{align*}

We can check that this function is continuous with respect to the previously defined metric. Let $\epsilon$ be a matrix such that $U:=I+\epsilon$ is a unitary. For any context $V=\{P_1,\ldots,P_k\}''$ we have
\begin{align*}
E_\rho(UVU^{-1})&-E_\rho(V)=\sum_{i=1}^k Tr(U^{-1}\rho UP_i)\ln Tr(U^{-1}\rho UP_i)-\sum_{i=1}^kTr(\rho P_i)\ln Tr(\rho P_i)\\
&=\sum_{i=1}^kTr((I+\epsilon^*)\rho(I+\epsilon)P_i)\ln Tr((I+\epsilon^*)\rho(I+\epsilon)P_i) - \sum_{i=1}^k Tr(\rho P_i)\ln Tr(\rho P_i)\\
&=\sum_{i=1}^k [Tr(\rho P_i)+\underbrace{Tr(\epsilon^*\rho P_i) + Tr(\rho \epsilon P_i) + Tr(\epsilon^*\rho\epsilon P_i)}_{A}]\ln [Tr(\rho P_i)+\\
&\ \ \ \ \ \ \  +Tr(\epsilon^*\rho P_i) + Tr(\rho \epsilon P_i) + Tr(\epsilon^*\rho\epsilon P_i)] -  \sum_{i=1}^kTr(\rho P_i)\ln Tr(\rho P_i)\\
&=\sum_{i=1}^k [Tr(\rho P_i)+ A][\ln Tr(\rho P_i)+\frac{1}{Tr(\rho P_i)}A+\mathcal{O}(A^2)]- \sum_{i=1}^kTr(\rho P_i)\ln Tr(\rho P_i)\\
&=\sum_{i=1}^k A[\ln Tr(\rho P_i)+1]+ \mathcal{O}(A^2)\\
&=\sum_{i=1}^k [Tr(\epsilon^*\rho P_i) + Tr(\rho \epsilon P_i) + Tr(\epsilon^*\rho\epsilon P_i)][\ln Tr(\rho P_i)+1] + \mathcal{O}(A^2)\\
&\leq C||\epsilon|| Tr(\rho P_i)[\ln Tr(\rho P_i)+1] + \mathcal{O}(||\epsilon||^2)
\end{align*}
for some finite constant $C$. This shows that $E_\rho$ is indeed continuous.

\subsection{Reconstructing pure states from global sections}\label{r1}

A direct implication of Remark \ref{unitarily equivalent global sections} is that unlike von Neumann entropy, which gives the same value for unitarily equivalent states, our contextual entropy gives different (though in a sense unitarily equivalent) global sections of the presheaf $\underline{[0,\ln n]^\preceq}$. This enables us not only to distinguish which global sections come from measures associated to pure states but also to explicitly reconstruct those pure states. We explain this method in more detail.

Recall that the von Neumann entropy of a state vanishes if and only if that state is a pure one. Given a global section $\gamma\in \Gamma\underline{[0,\ln n]^\preceq}$ if $\gamma$
is in the image of the contextual entropy mapping $E$ then it comes from a measure associated to a pure state if and only if there exists a maximal context $V$ such that
$\gamma|_{_V}=0$. This means that if $V$ is generated by the set of rank one projections $$\{P_1,\ldots,P_n\}=\{\left|\psi_1\right>\left<\psi_1\right|,\ldots,
\left|\psi_n\right>\left<\psi_n\right|\}$$ our state must equal one of these projections and our only task is to determine which one. For this, consider unitaries $U_1,\ldots,U_n$
which have the property that $U_iP_iU^{-1}_i=P_i$ and $$\{U_iP_jU^{-1}_i~|~1\leq j\leq n, j\neq i\}\neq\{P_1,\ldots,\widehat{P_i},\ldots,P_n\}$$
Think of this as taking $n$ rotations in Hilbert space, each of which preserves one axis of the orthonormal basis $\{\left|\psi_1\right>,\ldots,\left|\psi_n\right>\}$ and rotates
the others, but without permuting them.

If we consider the contexts $V_i= \{U_iP_1U_i^{-1},\ldots,U_iP_nU_i^{-1}\}''$ then $\rho$ will be diagonal only in one of the orthonormal bases which correspond to these contexts.
This means the contextual entropy will assign the value zero to precisely one of the contexts $V_i$, and hence our state is
$$\rho= \{U_iP_1U_i^{-1},\ldots,U_iP_nU_i^{-1}\}\cap \{P_1,\ldots,P_n\}$$

\subsection{Reconstructing arbitrary quantum states from global sections}

Consider a global section $\gamma\in \Gamma\underline{[0,\ln n]^\preceq}$. We present here an algorithm for reconstructing the state $\rho$ for which $E(\mu_\rho)=\gamma$. We
assume for now that $\gamma$ is in the image of the contextual entropy mapping. If our algorithm will fail to find a solution we will know that our initial assumption was false.
Otherwise we must perform one final check at the end of our algorithm to make sure that this assumption was correct.

Start by identifying one maximal context $V$ such that $\gamma|_{_V}\leq\gamma|_{_W}$ for all maximal contexts $W$. This amounts to retrieving the von Neumann entropy of the state$\rho$ from the contextual entropy. If this equals zero we must have a pure state, and we already saw how to reconstruct those. Otherwise, we know from Section \ref{VNeu} that
$\rho$ must be diagonal in the context $V$. If we consider the canonical projections $\{P_1,\ldots,P_n\}$ which generate $V$, the fact that $\rho$ is diagonal at $V$ implies that
it is of the form
$$\rho=\lambda_1 P_1+\ldots+\lambda_n P_n$$
where the $\lambda_i$'s are the eigenvalues of $\rho$. We are now left with the task of determining these eigenvalues. For this assume that the dimension $n$ of our Hilbert space
is greater or equal to $3$. For each $i\in\{1,\ldots,n\}$ let $$W_i:=\{P_i,I-P_i\}''$$
Then $\textrm{Sh}(\lambda_i,1-\lambda_i)$ must equal $\gamma|_{_{W_i}}$ for all $i$. If $$\gamma|_{_{W_i}}>\ln 2$$ then the global section $\gamma$ cannot be in the image of the
contextual entropy mapping, and our algorithm stops. Otherwise, the transcendental equation $\textrm{Sh}(x_1,x_2)=k$ has two solutions which are symmetric around $\frac{1}{2}$ as
indicated in Figure \ref{Sh}.

\begin{figure}[H]
\centering
\includegraphics[width=5.2cm]{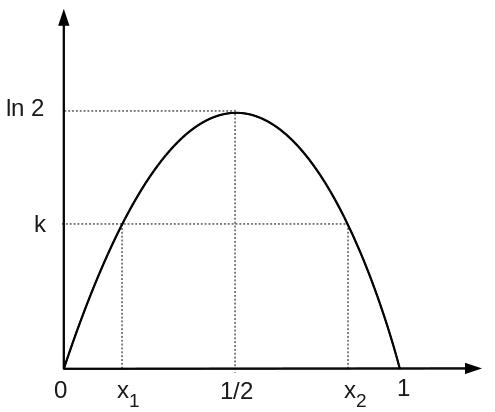}
\caption{Shannon entropy for a probability distribution with two variables}\label{Sh}
\end{figure}

\noindent Let $p_i$ and $1-p_i$ be the solutions of $\textrm{Sh}(x_1,x_2)=\gamma|_{_{W_i}}$ and assume without loss of generality that $p_i\leq \frac{1}{2}$. For each $i$ we have
at most two choices for the value of the $i^{th}$ eigenvalue of $\rho$: we can either set $\lambda_i=p_i$ or $\lambda_i=1-p_i$. Since
$$\lambda_1+\ldots+\lambda_n=1$$
there can be at most one $j$ such that $p_j < \frac{1}{2}$ and $\lambda_j=1-p_j$, while for all $i\neq j$ we must have $\lambda_i=p_i$. Let 
$$S=\sum_{i=1}^n p_i$$
Clearly $\sum_{i=1}^n\lambda_i\geq S$. We are now faced with three possible scenarios:

\begin{enumerate}
 \item If $S>1$ we obtain a contradiction, hence $\gamma$ can not be in the image of the contextual entropy mapping.
\item If $S=1$ then the assignment $\lambda_i=p_i$ gives one possible solution for the set of eigenvalues of our state $\rho$. This solution is clearly unique: any other choice of
values will make the total sum of the eigenvalues of $\rho$ greater than $1$.
 \item If $S<1$ then we must determine the $j$ for which $p_j<\frac{1}{2}$ and $\lambda_j=1-p_j$. If such a $j$ exists then
$$1=\sum_{i=1}^n\lambda_i=S-p_j+(1-p_j)$$
hence $p_j$ should equal $\frac{S}{2}$. Now
\begin{itemize}
 \item if the value $\frac{S}{2}$ does not appear amongst $\{p_1,\ldots,p_n\}$ then we have no solution 
 \item if $\frac{S}{2}$ appears once, we have a unique solution
\item if it appears more than once, let $\{j_1,\ldots,j_m\}$ be the set of indices for which $p_{j_k}=\frac{S}{2}$. If we set $\lambda_{j_k}=1-p_{j_k}$ and take another
$l\in\{1,\ldots,m\}$, $l\neq k$. Then
$$\sum_{i=1}^n\lambda_i\geq \lambda_{j_k}+ \lambda_{j_l} = 1-p_{j_k}+p_{j_k}=1$$
In order to have equality we must have $m=2$ and $p_i=0$ for all $i\notin \{j_1,j_2\}$. Unless this happens we cannot find a solution. On the other hand, for $m=2$ we have two
possible solutions. These correspond to the two states
$$\rho_1=p_{j_1} P_{j_1}+ (1-p_{j_1})P_{j_2}$$ and $$\rho_2=(1-p_{j_1}) P_{j_1}+ p_{j_1} P_{j_2}$$

In order to distinguish these two states we need to run our algorithm again but with a slight modification: instead of considering two-dimensional subalgebras of $V$, we take a
unitary $U$ which rotates all the canonical projections generating $V$, except $P_{j_1}$, which it leaves unchanged, and we consider the two dimensional subalgebras of $U\cdot
V\cdot U^{-1}$ of the form
$$\widetilde{W_i}=\{UP_iU^{-1}, I- UP_iU^{-1}\}''$$
We solve the equations $\mathrm{Sh}(x_i,1-x_i)=\gamma|_{_{\widetilde{W_i}}}$ and choose as before $n$ numbers from these solutions, such that they add up to one. These numbers
represent the diagonal entries of the matrix $U^{-1}\rho U$. We will not encounter any problems when retrieving these entries (unless of course, our initial assumption about
$\gamma$ being in the image of the contextual entropy mapping was false) because unlike the eigenvalues of $\rho$, these diagonal entries must contain more than three non-zero
elements. Moreover, the $j_1^{th}$ entry on the diagonal of $U^{-1}\rho U$ will be the same as the $j_1^{th}$ eigenvalue of $\rho$, and this tells us whether $\rho$ equals $\rho_1$or $\rho_2$.
\end{itemize}
\end{enumerate}

We have now reached the end of our algorithm. If it has failed to retrieve a solution, we conclude that we have considered a global section $\gamma$ which was not in the image of
the contextual entropy mapping. Otherwise, our reconstructed state is
$$\rho=\lambda_1 P_1+\ldots+\lambda_n P_n$$
In order to obtain $\rho$ we have taken into account only a finite number of contexts, and it might happen that when all contexts are taken into account $E(\mu_\rho)\neq\gamma$. In this case we also conclude that $\gamma$ was not in the image of the contextual entropy mapping, and discard the state $\rho$.

\subsection{Two-dimensional Hilbert spaces}\label{r3}

For two dimensional Hilbert spaces the contextual entropy is a two-to-one mapping. We will justify this statement below.

First, it is easy to check that for any one dimensional projection $P$ the states $\rho_1=\lambda P+(1-\lambda)(I-P)$ and $\rho_2=(1-\lambda)P+\lambda (I-P)$ are mapped to the same global section of $\underline{[0,\ln 2]}^\preceq$: note that $\rho_1=I-\rho_2$. Hence for every context $W=\{Q,I-Q\}''$
$$E(\mu_{\rho_1})|_{_W}=\mathrm{Sh}(\mathrm{Tr}\rho_1 Q, 1-\mathrm{Tr}\rho_1 Q)$$
while
\begin{align*}
E(\mu_{I-\rho_1})|_{_W}&=\mathrm{Sh}(\ \mathrm{Tr}(I-\rho_1) Q , \  1-\mathrm{Tr}(I-\rho_1) Q)\\
&=\mathrm{Sh}(\ \mathrm{Tr}(I-\rho_1)(I- Q), \ 1-\mathrm{Tr}(I-\rho_1)(I-Q)) 
\end{align*}
And since every one dimensional projection $Q$ has trace equal to unity,
$$\mathrm{Tr}(I-\rho_1)(I- Q)=\mathrm{Tr}I-\rho_1 - Q+\rho_1Q=\mathrm{Tr}\rho_1 Q$$
and so also $E(\mu_{\rho_1})|_{_W}=E(\mu_{I-\rho_1})|_{_W}$.

On the other hand, given a global section of $\underline{[0,\ln 2]}^\preceq$, the poset $\mathcal{V}(M_2)$ consists only of two-dimensional subalgebras. We can identify a context
$V=\{P, 1-P\}''$ for which $\gamma|_{_V}$ is minimal, and solve the equation $\mathrm{Sh}(x,1-x)=\gamma|_{_V}$ to find the eigenvalues of $\rho$. Since we have no further
information available, we cannot say which eigenvalue corresponds to which of the two projections generating $V$.

Note however that we are not far from reconstructing $\rho$: we would need to encode only one extra bit of information in order to fully reconstruct a two-dimensional quantum
state.

\subsection{A note on Gleason's Theorem}\label{gls}

Our result can be related to Gleason's theorem, since every finitely additive probability measures $\mu$ also determines a probability distribution $(\mu(P_1),\ldots,\mu(P_k))$ for each context $V=\{P_1,\ldots,P_k\}''$. Hence given $\mu$, we can define a map $E_\mu$ on the set of contexts by assigning to each context $V$ the Shannon entropy of its associated probability distribution. Using our reconstruction algorithm, we can get back the quantum state $\rho$ from $E_\mu$. Note however, that in order to do this we had to assume that we started from a probability measure $\mu$ on projections. Having an \emph{axiomatic} characterisation of those real-valued maps on contexts which are contextual entropy maps (and hence come from quantum states) would allow us to reconstruct quantum states directly.

\section{Other entropies}\label{other}

We have seen how Shannon entropy can be encoded in the topos approach, and how one can afterwards retrieve its quantum analogue, the von Neumann entropy. It is natural to ask at this point whether a similar encoding can be found for other classical entropies, and whether such an encoding would still enable us to retrieve their quantum analogues. We will look here at Renyi entropies, and show that it is possible to obtain their topos theoretic equivalent.

Renyi entropies form a one parameter family of Schur concave, additive entropies defined by 
$$R_q(p_1,\ldots,p_n)=\frac{1}{1-q}\ln\left[\sum_{i=1}^n p_i^q\right], \ \forall q\geq 0$$
Special cases of the Renyi entropies include $q=0$, which is the logarithm of the number of non-zero components of the distribution and is known as the \textit{Hartley entropy}. When $q\rightarrow 1$, we have the Shannon entropy, and when $q\rightarrow \infty$ the \textit{Chebyshev entropy} $R_\infty=-\ln p_{max}$, a function of the largest component $p_{max}$. 

For any given probability vector $\overrightarrow{p}$ the Renyi entropy is a continuous, non-increasing function of its parameter:
$$R_t(\overrightarrow{p})\leq R_q(\overrightarrow{q}), \ \forall t>q$$
To illustrate this, we have plotted in Figure \ref{Renyi} several Renyi entropies as functions of a probability distribution with two variables. Note that since Renyi entropies are Schur concave, their maximum value is attained for the totally mixed probability distribution, in which case $R_q(1/n,\ldots,1/n)=\ln n$.

\begin{figure}[H]
\centering
\includegraphics[width=8cm]{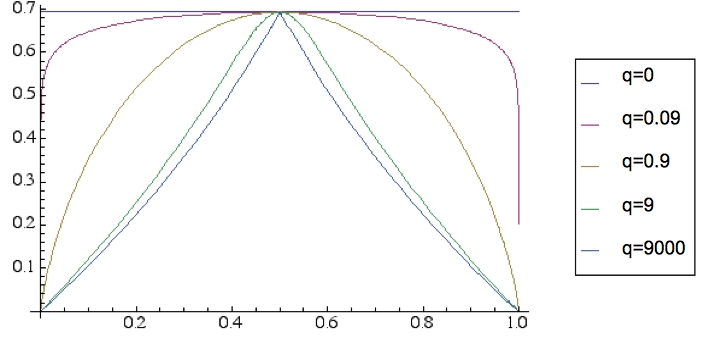}
\caption{$R_q(x,1-x)$ for $q=0, 0.5, 3$, $q\rightarrow 1$ and $q\rightarrow \infty$}\label{Renyi}
\end{figure}

At each parameter $q$, the quantum Renyi entropy can be defined on the set of density matrices as the classical Renyi entropy of the corresponding spectra:
$$\mathrm{R}_q(\rho)=\frac{1}{1-q}\ln\mathrm{Tr}(\rho^q)=\frac{1}{1-q}\ln\left[\sum_{i=1}^n \lambda_i^q\right]=R(\lambda_1,\ldots,\lambda_n)$$
Quantum Renyi entropy assigns the value $0$ to pure states exclusively, and $\ln n$ to the maximally mixed state $\rho_*=1/n I$.

We would like to define a contextual Renyi entropy using the same approach as in the case of the Shannon entropy. This suggests we should define contextual Renyi entropy locally as
$$\mathcal{R}_q(\mu)_{_V}=R_q(\mu|_{_V}(P_1),\ldots,\mu|_{_V}(P_n), \  \forall V=\{P_1,\ldots,P_n\}''$$
Of course, we would like these local components to fit together nicely as before, and to form a global section of some real number presheaf. For Shannon entropy, the fact that a global section could be formed was a consequence of the recursion property. Renyi entropies are in general not recursive, but they do satisfy a property which we shall call \textit{weak recursivity}, and we shall see that this is enough for our purposes. 

\begin{definition}
Let $\mathrm{S}$ be some function defined on the set of all probability distributions. If we coarse grain a probability distribution $(x_1,\ldots,x_n)$ by not distinguishing between all the outcomes, we obtain a new probability distribution with components
 $$p_1=\sum_{i=1}^{k_1} x_i, \  \ldots, \ p_r=\sum_{i=k_{r-1}+1}^{k_r} x_i$$
for some $0<k_1<k_2<\ldots<k_r=n$. We say that $\mathrm{S}$ is \textbf{weakly recursive} if
$$\mathrm{S}(x_1,\ldots,x_n)\geq \mathrm{S}(p_1,\ldots,p_r)$$
\end{definition}

One can easily check that Renyi entropies indeed satisfy this property, and hence for any two contexts $V'\supseteq V$
$$\mathcal{R}_q(\mu)_{_{V'}}\geq\mathcal{R}_q(\mu)_{_V}, \ \forall \mu\in\mathcal{M}(\underline{\Sigma})$$
This means it is possible to define contextual Renyi entropy as a mapping
$$\mathcal{R}_q:\mathcal{M}(\underline{\Sigma})\longrightarrow \Gamma \underline{[0,\ln n]^\preceq}$$

Since Renyi entropies are Schur concave, their quantum counterparts can be retrieved from the contextual Renyi entropies by finding the minimum over the set of values assigned to all maximal contexts. This is justified by the Schur-Horn lemma and similar arguments to those that were already used in Section \ref{VNeu}.

We will now briefly discuss some of the properties of Renyi entropies and their contextual analogues. 

\noindent\textbf{Concavity}

We saw in Section \ref{cns} that the global concavity of the contextual entropy was expressed as the concavity of each of its local components, and hence it was a direct consequence of the concavity property of Shannon entropy. Renyi entropies however are only concave for $0<q\leq 1$. In fact, it is known that concavity is lost for $q>q_*>1$, where $q_*$ depends on the dimension of the probability distribution. Concavity of the contextual Renyi entropies is then going to hold under the same conditions.

\noindent\textbf{Additivity and Subadditivity}

Renyi entropies are additive, so we can use the same justification as in Section \ref{cns} to defin/e subadditivity for contextual Renyi entropies as the following condition:
$$\mathcal{R}_q(\mu_\rho)_{_V}\leq\mathcal{R}_q(\mu_{\rho_1\otimes\rho_2})_{_V}, \ \forall V\in \mathcal{V}(\mathcal{B}(H))$$
This allows us to say that contextual Renyi entropies are also additive. On the other hand, since neither classical nor quantum Renyi entropies are subadditive (except for $q=0$ and $q=1$),  contextual Renyi entropy also doesn't have this property. 



\noindent\textbf{State reconstruction}

Finally, recall that the reconstruction algorithms described in Sections \ref{r1}-\ref{r3} relied on Gleason's theorem, the Schur-Horn lemma, and two extra ingredients: one was the fact that von Neumann entropy vanished only for pure states, and the second was the fact that for probability distributions with two variables one could find precisely two sollutions (symmetric around $1/2$) for which Shannon entropy would take any given value within its image. Both of these ingredients are present when we consider Renyi entropy, for positive parameter $q0$, as Figure \ref{Renyi} clearly indicates. This means that the reconstruction algorithms can also be applied to contextual Renyi entropies, with the exception of $\mathcal{R}_0$. 

\section{Summary and outlook}

Given a quantum state $\rho$ on a finite-dimensional Hilbert space $\mathcal{H}$ and a measurement context $V=\{P_1,\ldots,P_n\}''$, we can extract the probability distribution $(Tr(\rho P_1),\ldots,Tr(\rho P_n))$ by repeated preparations and measurements. In contrast to the quantum state itself, measurement contexts have direct operational meaning. The contextual entropy $E_\rho:\mathcal{V}\rightarrow [0,\ln n]$, assigns to each probability distribution its Shannon entropy and hence encodes data that can be extracted operationally from the quantum state $\rho$.

The fact that the state $\rho$ can be reconstructed from its contextual entropy $E_\rho$ if $\dim\mathcal{H}\geq 3$ provides a new, information-theoretic characterisation of quantum states that takes contextuality into account explicitly.  This characterisation can be generalized to other entropies with classical and quantum counterparts, such as R\'enyi entropies.

We presented a number of properties of the contextual entropy $E_\rho$ and discussed how the reconstruction of a quantum state from its contextual entropy relates to Gleason's theorem. As matters stand, the properties we have presented do not characterise contextual entropy fully: there are functions $F:\mathcal{V}\rightarrow [0,\ln n]$ that have all the properties discussed in the main text, but which are not the contextual entropy of any quantum state. An axiomatic characterisation of those functions which are contextual entropy maps promisses to be a non-trivial open question, as it would turn our reconstruction algorithm into an alternative proof of Gleason's theorem.

\vskip 4pt
\textbf{Acknowledgements.} We thank Oscar Dahlsten, Rui Soares Barbossa, Andrei Constantin, Samson Abramsky and Bob Coecke for discussions, and we thank Traian Abrudan for helpful technical explanations regarding the use of Matlab optimization codes. C.M.C. is supported by an EPSRC graduate scholarship.

\end{document}